\newtheorem{proposition}{Proposition}
\newtheorem{lemma}{Lemma}
\newtheorem{remark}{Remark}
\newcommand{\norm}[2][\relax]{\ifx#1\relax \ensuremath{\left\Vert#2\right\Vert} \else \ensuremath{\left\Vert#2\right\Vert_{#1}}\fi}
\begin{document}

\title{The Michaelis--Menten reaction at low substrate concentrations: \\
Pseudo-first-order kinetics and conditions for timescale separation}

\author{Justin Eilertsen\\
            Mathematical Reviews\\ American Mathematical Society\\
            416 4th Street\\Ann Arbor, MI, 48103\\
            e-mail: {\tt jse@ams.org}\\\\
        Santiago Schnell\\
            Department of Biological Sciences and\\
            Department of Applied and Computational Mathematics and Statistics\\
            University of Notre Dame\\
            Notre Dame, IN 46556\\
            e-mail: {\tt santiago.schnell@nd.edu}\\\\
        Sebastian Walcher\\
            Mathematik A, RWTH Aachen\\
            D-52056 Aachen, Germany\\
            e-mail: {\tt walcher@matha.rwth-aachen.de}
        }


\maketitle
\begin{abstract} 
We demonstrate that the Michaelis--Menten reaction mechanism can be accurately approximated by a linear system when the initial 
substrate concentration is low. This leads to pseudo-first-order kinetics, simplifying mathematical calculations and experimental
analysis. Our proof utilizes a monotonicity property of the system and Kamke's comparison theorem. This linear approximation yields 
a closed-form solution, enabling accurate modeling and estimation of reaction rate constants even 
without timescale separation.  Building on prior work, we establish that the sufficient condition for the validity of this 
approximation is $s_0 \ll K$, where $K=k_2/k_1$ is the Van Slyke--Cullen constant. This condition is independent of the initial 
enzyme concentration. Further, we investigate timescale separation within the linear system, identifying necessary and sufficient 
conditions and deriving the corresponding reduced one-dimensional equations.\\

\noindent
{\bf MSC (2020):} 92C45, 34C12, 34E15\\
{\bf Keywords}: Comparison principle,  pseudo-first-order kinetics, total quasi-steady state approximation, monotone dynamical system
\end{abstract}

\section{Introduction and overview of results} 
The irreversible Michaelis--Menten reaction mechanism describes the mechanism of action of a fundamental reaction for biochemistry. 
Its governing equations are:
\begin{equation}\label{eqmmirrev}
\begin{array}{rccrclcl}
\dot s&=&f_1(s,c):=& -k_1e_0s&+&(k_1s+k_{-1})c & &  \\
\dot c&=& f_2(s,c):=&k_1e_0s&-&(k_1s+k_{-1}+k_2)c & &\\
\end{array}
\end{equation}
with positive parameters $e_0$, $s_0$, $k_1$, $k_{-1}$, $k_2$, and typical initial conditions $s(0)=s_0$, $c(0)=0$. Understanding 
the behavior of this system continues to be crucial for both theoretical and experimental purposes, such as identifying the rate 
constants $k_1$, $k_{-1}$ and $k_2$. However, the Michaelis--Menten system~\eqref{eqmmlin} cannot be solved via elementary 
functions, hence approximate simplifications have been proposed since the early 1900's.

In 1913, Michaelis and Menten~\cite{MM1913} introduced the partial equilibrium assumption for enzyme-substrate complex formation, 
effectively requiring a small value for the catalytic rate constant $k_2$. Later, for systems with low initial enzyme concentration 
($e_0$), Briggs and Haldane~\cite{BH} derived the now-standard quasi-steady-state reduction (see Segel and Slemrod~\cite{SSl} for 
a systematic analysis). Both these approaches yield an asymptotic reduction to a one-dimensional equation. A mathematical justification is obtained from
singular perturbation theory. Indeed, most literature on the Michaelis--Menten system focuses on dimensionality reduction, with 
extensive discussions on parameter combinations ensuring this outcome.

We consider a different scenario: low initial substrate concentration ($s_0$). This presents a distinct case from both the low-enzyme 
and partial-equilibrium scenarios, which yield one-dimensional reductions in appropriate parameter ranges. Instead, with 
low substrate, we obtain a different simplification:
\begin{equation}\label{eqmmlin}
    \begin{pmatrix}
        \dot s \\ \dot c
    \end{pmatrix}=\begin{pmatrix}
    -k_1e_0 & k_{-1}\\
    k_1e_0& -(k_{-1}+k_2)
\end{pmatrix}\cdot\begin{pmatrix}
        s\\ c
    \end{pmatrix}.
\end{equation}
This linear Michaelis--Menten system~\eqref{eqmmlin} was first studied by Kasserra and Laidler~\cite{Laidler}, who proposed the 
condition of excess initial enzyme concentration ($e_0 \gg s_0$) for the validity of the linearization of \eqref{eqmmirrev} to
\eqref{eqmmlin}.  This aligns with the concept of pseudo-first-order kinetics in chemistry, where the concentration of one 
reactant is so abundant that it remains essentially constant \cite{Silicio}.

Pettersson~\cite{Pettersson} further investigated the linearization, adding the assumption that any complex concentration accumulated 
during the transient period remains too small to significantly impact enzyme or product concentrations. Later, Schnell and 
Mendoza~\cite{Schnell} refined the validity condition $s_0 \ll K_M$ (where $K_M = (k_{-1}+k_2)/k_1$ is the Michaelis constant) 
to be sufficient for the linearization (alias dictus the application of pseudo-first-order kinetics to the Michaelis--Menten reaction). 
However, Pedersen and Bersani~\cite{Pedersen} found this condition overly conservative. They introduced the total substrate 
concentration $\bar s=s+c$ (used in the total quasi-steady state approximation \cite{BBS}) and proposed the condition $s_0 \ll K_M + e_0$, 
reconciling the Schnell--Mendoza and Kasserra--Laidler conditions.

In this paper, we demonstrate that the solutions of the Michaelis--Menten system~\eqref{eqmmirrev} admit a global approximation by solutions 
of the linear differential equation~\eqref{eqmmlin} whenever the intial substrate concentration is small. Our proof utilizes Kamke's comparison theorem for cooperative differential 
equations.\footnote{This concept is distinct from cooperativity in biochemistry.} This theorem allows us to establish upper and lower 
estimates for the solution components of \eqref{eqmmirrev} via suitable linear systems. Importantly, these bounds are valid whenever
$s_0<K$, where $K=k_2/k_1$ is the Van Slyke--Cullen constant, and the estimates are tight whenever $s_0\ll K$. This effectively serves as sufficient condition for the validity 
of the linear approximation Michaelis--Menten system~\eqref{eqmmlin}.  We further prove that solutions of the original Michaelis--Menten
system~\eqref{eqmmirrev} converge to solutions of the linear system~\eqref{eqmmlin} as $s_0\to 0$, uniformly for all positive times. Moreover,
the asymptotic rate of convergence is of order $s_0^2$.

Additionally, we explore the problem of of timescale separation within the linear Michaelis--Menten system~\eqref{eqmmlin}. 
While a linear approximation simplifies the system, it does not guarantee a clear separation of timescales.  In fact, there are
cases where no universal accurate one-dimensional approximation exists. Since both eigenvalues of the matrix in \eqref{eqmmlin} are real 
and negative, the slower timescale will dominate the solution's behavior.  True timescale separation occurs if and only if these 
eigenvalues differ significantly in magnitude. Importantly, we find that this separation is not always guaranteed. There exist 
parameter combinations where a one-dimensional approximation would lack sufficient accuracy. Building upon results from \cite{ESWAnti}, 
we establish necessary and sufficient conditions for timescale separation and derive the corresponding reduced one-dimensional 
equations.

\section{Low substrate: Approximation by a linear system}
To set the stage, we recall some facts from the theory of cooperative differential equation systems. For a comprehensive account 
of the theory, we refer to the H.L.~Smith's monograph \cite[Chapter~3]{Smi} on monotone dynamical systems. Consider the standard 
ordering, here denoted by $\prec$, on $\mathbb R^n$, given by
\begin{equation*}
\begin{pmatrix}
x_1\\ \vdots \\ x_n 
\end{pmatrix} \prec \begin{pmatrix}
y_1\\ \vdots \\y_n 
\end{pmatrix} \Longleftrightarrow x_i\leq y_i\text{  for  } 1\leq i\leq n.
\end{equation*}

Now let a differential equation
\begin{equation}\label{ode}
\dot x = f(x)\quad \text{on  } U\subseteq \mathbb R^n,\quad \emptyset \not= U\text{  open  }
\end{equation}
be given, with $f$ continuously differentiable. We denote by $F(t,y)$ the solution of the initial value problem \eqref{ode} with 
$x(0)=y$, and call $F$ the \textit{local flow} of the system.

Given a convex subset $D\subseteq U$, we say that \eqref{ode} is \textit{cooperative on $D$} if all non-diagonal elements of the 
Jacobian are nonnegative; symbolically
\begin{equation*}
Df(x)=\begin{pmatrix} *& +&\cdots& &+ \\ +&*&+&\cdots&+\\  \vdots&+&\ddots& & \vdots\\ \vdots &  &  &\ddots & + \\+&\cdots & \cdots & + & *
\end{pmatrix} \text{  for all  } x\in D.
\end{equation*}

As shown by M.W.~Hirsch (\cite[and subsequent papers]{Hir}) and other researchers (see \cite[and the references therein]{Smi}), 
cooperative systems have special qualitative features. Qualitative properties of certain monotone chemical reaction networks 
were investigated by De Leenheer et al.~\cite{DLAS}. In the present paper, we consider a different feature of such systems.
Our interest lies in the following simplified version of Kamke's comparison theorem \cite{Kam}.\footnote{Kamke \cite{Kam} 
considered non-autonomous systems with less restrictive properties. In particular the statement also holds on P-convex sets. 
A subset $D$ of $\mathbb R^n$ is called P-convex if, for each pair $y,\,z\in D$ with $y\prec z$, the line segment connecting 
$y$ and $z$ is contained in $D$. Thus, P-convexity is a weaker property than convexity.}

\begin{proposition} Let $D\subseteq U$ be a convex positively invariant set for \eqref{ode}, and assume that this system is 
cooperative on $D$. Moreover let $\dot x=g(x)$ be defined on $U$, with continuously differentiable right hand side, and local 
flow $G$.
\begin{itemize}
\item If $f(x)\prec g(x)$ for all $x\in D$, and $y\prec z$, then $F(t,y)\prec G(t,z)$ for all $t\geq 0$ such that both solutions exist.
\item If $g(x)\prec f(x)$ for all $x\in D$, and $z\prec y$, then $G(t,z)\prec F(t,y)$ for all $t\geq 0$ such that both solutions exist.
\end{itemize}
\end{proposition}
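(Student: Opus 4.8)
The plan is to prove the first item by the classical first-escape-time argument for systems of differential inequalities, and to obtain the second from it by the change of variables $x\mapsto-x$: this reverses the order $\prec$, carries $\dot x=f(x)$ to another cooperative system on the convex, positively invariant set $-D$, and turns the hypotheses and conclusion of the second item into those of the first. So I fix the hypotheses of the first item, writing $F(t,y)$, $G(t,z)$ for the two solutions under comparison. The first step is a reduction to strict inequalities, which is needed because $F(t,y)\prec G(t,z)$ is only a non-strict componentwise relation and equality of some components may already occur at $t=0$. Put $\mathbf 1=(1,\dots,1)^{\top}$; for $\delta>0$ set $g_\delta(x):=g(x)+\delta\mathbf 1$ and $z_\delta:=z+\delta\mathbf 1$, with $G_\delta$ the local flow of $\dot x=g_\delta(x)$. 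Then every component of $g_\delta(x)-f(x)$ is strictly positive for $x\in D$, and so is every component of $z_\delta-y$. By continuous dependence of solutions on the right-hand side and on initial data, for every compact subinterval $[0,T]$ of the common interval of existence of $F(\cdot,y)$ and $G(\cdot,z)$ the solution $G_\delta(\cdot,z_\delta)$ is defined on $[0,T]$ once $\delta$ is small enough and converges there uniformly to $G(\cdot,z)$ as $\delta\downarrow0$. Hence it suffices to prove, for fixed small $\delta>0$, that every component of $G_\delta(t,z_\delta)-F(t,y)$ remains strictly positive as long as both solutions exist, and then to let $\delta\downarrow0$.

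For this strict claim write $w:=F(\cdot,y)$ and $v:=G_\delta(\cdot,z_\delta)$ on a common interval $[0,\tau)$; by positive invariance $w(t)\in D$ throughout, and the argument is run on the relatively open sub-interval on which $v(t)$ likewise lies in $D$ (which, in the applications intended here, is all of $[0,\tau)$). The set $S$ of $t\in[0,\tau)$ for which every component of $v-w$ is positive on $[0,t]$ contains $0$ and is relatively open, hence $S=[0,t^{*})$ with $t^{*}=\sup S$; if $t^{*}=\tau$ we are done, so assume $t^{*}<\tau$. By continuity every component of $v(t^{*})-w(t^{*})$ is $\ge0$ and there is an index $i$ with $v_i(t^{*})=w_i(t^{*})$. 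The scalar function $\varphi:=v_i-w_i$ is then nonnegative on $[0,t^{*}]$ and vanishes at $t^{*}$, so $\varphi'(t^{*})\le0$; since $\dot v_i=g_{\delta,i}(v)$ and $\dot w_i=f_i(w)$ this says
\begin{equation*}
g_{\delta,i}\bigl(v(t^{*})\bigr)-f_i\bigl(w(t^{*})\bigr)\;\le\;0 .
\end{equation*}
On the other hand I decompose
\begin{equation*}
g_{\delta,i}\bigl(v(t^{*})\bigr)-f_i\bigl(w(t^{*})\bigr)=\bigl[g_{\delta,i}(v(t^{*}))-f_i(v(t^{*}))\bigr]+\bigl[f_i(v(t^{*}))-f_i(w(t^{*}))\bigr].
\end{equation*}
The first bracket equals $g_i(v(t^{*}))-f_i(v(t^{*}))+\delta\ge\delta>0$ by the comparison $f\prec g$ on $D$. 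The second bracket, written as $\int_0^1\sum_j\partial_jf_i\bigl(w(t^{*})+\theta(v(t^{*})-w(t^{*}))\bigr)\bigl(v_j(t^{*})-w_j(t^{*})\bigr)\,d\theta$, is $\ge0$: the summand with $j=i$ vanishes because $v_i(t^{*})=w_i(t^{*})$, and for $j\ne i$ one has $\partial_jf_i\ge0$ by cooperativity while $v_j(t^{*})-w_j(t^{*})\ge0$, the convexity of $D$ guaranteeing that the whole segment between $w(t^{*})$ and $v(t^{*})$, along which the integrand is evaluated, lies in $D$. Hence the difference is $\ge\delta>0$, contradicting the displayed inequality; therefore $t^{*}=\tau$, the strict claim holds, and $\delta\downarrow0$ finishes the first item.

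The step I expect to be the main obstacle is the analysis at the escape time $t^{*}$: one must isolate the single component that ``touches'', derive for it the correct one-sided inequality $\varphi'(t^{*})\le0$, and then split $\dot v_i-\dot w_i$ in precisely the way that lets the three hypotheses come into play — the strict separation produced by the $\delta$-perturbation, the sign pattern of the off-diagonal entries of $Df$, and the convexity and invariance of $D$ that confine all the relevant points and segments to the region where $f\prec g$ and cooperativity are available. The reduction to strict inequalities by perturbing the vector field is routine but genuinely necessary to get the argument started, and the only other point that needs care is keeping every evaluation of $f\prec g$ and of $Df$ inside $D$.
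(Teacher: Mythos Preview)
The paper does not prove this proposition at all: it is stated as a simplified version of Kamke's comparison theorem, with references to Kamke and to Smith's monograph, and then used as a black box. So you are supplying a proof where the paper offers none.

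Your argument is the classical one --- strict perturbation by $\delta\mathbf 1$, first-escape-time contradiction, and passage to the limit --- and it is carried out correctly. The decomposition at $t^{*}$ is exactly the right way to bring in the three hypotheses, and the reduction of the second item to the first via $x\mapsto -x$ is sound (the Jacobian of $-f(-x)$ equals $Df(-x)$, so cooperativity survives). The one soft spot is the one you already flag: your proof needs $v(t^{*})\in D$ both to invoke $f\prec g$ at $v(t^{*})$ and to keep the segment from $w(t^{*})$ to $v(t^{*})$ inside the region where $Df$ has the required sign pattern. The proposition as written does not assume $D$ is positively invariant for $g$, so in full generality this step is not guaranteed; you handle it by restricting to the sub-interval where $v\in D$ and noting that in the paper's applications this is the whole interval. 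That is an honest caveat rather than an error, and it matches the informal way the paper states and uses the result.
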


\subsection{Application to the Michaelis--Menten systems~\eqref{eqmmirrev} and \eqref{eqmmlin}}
For the following discussions, we recall the relevant derived parameters
\begin{equation}\label{constants}
K_S:=k_{-1}/k_1,\quad K_M:=(k_{-1}+k_2)/k_1\quad K:=k_2/k_1 =K_M-K_S,
\end{equation}
where $K_S$ is the complex equilibrium constant, $K_M$ is the Michaelis constant, and $K$ is the van Slyke-Cullen constant.

The proof of the following facts is obvious.
\begin{lemma}\label{estlem}
\begin{enumerate}[(a)]
\item The Jacobian of system \eqref{eqmmirrev}
is equal to
    \[
    \begin{pmatrix}
        \ast & k_1s+k_{-1}\\
        k_1(e_0-c) &\ast
    \end{pmatrix}.
    \]
The off-diagonal entries are $\geq 0$ in the physically relevant  subset
\[
D=\left\{(s,c);\, s\geq 0,\, e_0\geq c\geq 0,\, s+c\leq s_0\right\}
\]
of the 
phase space, and $D$ is positively invariant and convex. Thus system \eqref{eqmmirrev} is cooperative in $D$.
\item In $D$, one has 
    \[
    \begin{array}{rcccl}
         f_1(s,c)&\geq &g_1(s,c)&:=&-k_1e_0s+k_{-1}c\\
         f_2(s,c)&\geq &g_2(s,c)&:=&k_1e_0s-(k_1s_0+k_{-1}+k_2)c,
    \end{array}
    \]
hence by Kamke's comparison theorem the solution of the linear system with matrix
    \[
G:=\begin{pmatrix}
    -k_1e_0 & k_{-1}\\
    k_1e_0& -(k_1s_0+k_{-1}+k_2)
\end{pmatrix} = k_1\begin{pmatrix}
    -e_0 & K_S\\
    e_0& -K_M(1+s_0/K_M)
\end{pmatrix} 
    \]
and initial value in $D$ provides component-wise lower estimates for the solution of \eqref{eqmmirrev} with the same 
initial value.
\item In $D$, one has 
    \[
    \begin{array}{rcccl}
         f_1(s,c)&\leq &h_1(s,c)&:=&-k_1e_0s+(k_1s_0+k_{-1})c\\
         f_2(s,c)&\leq &h_2(s,c)&:=&k_1e_0s-(k_{-1}+k_2)c,
    \end{array}
    \]
hence by Kamke's comparison theorem the solution of the linear system with matrix
    \[
H:=\begin{pmatrix}
    -k_1e_0 & k_1s_0+k_{-1}\\
    k_1e_0& -(k_{-1}+k_2)
\end{pmatrix}=k_1\begin{pmatrix}
    -e_0 & K_S(1+s_0/K_S)\\
    e_0& -K_M
\end{pmatrix}
    \]
and initial value in $D$ provides component-wise upper estimates for the solution of \eqref{eqmmirrev} with the same 
initial value.
\end{enumerate}
\end{lemma}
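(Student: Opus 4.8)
The plan is to verify the three assertions by direct computation; the only item with any real content is the positive invariance of $D$, and after that the rest is bookkeeping against the hypotheses of Proposition~1.

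For part (a): differentiating $f_1=-k_1e_0s+k_1sc+k_{-1}c$ and $f_2=k_1e_0s-k_1sc-(k_{-1}+k_2)c$ gives $\partial f_1/\partial c=k_1s+k_{-1}$ and $\partial f_2/\partial s=k_1(e_0-c)$, and these are exactly the off-diagonal Jacobian entries; both are nonnegative precisely when $s\geq 0$ and $c\leq e_0$, which are among the defining inequalities of $D$. Convexity of $D$ is clear, being an intersection of half-planes. For positive invariance I would apply Nagumo's boundary criterion, checking that the vector field does not point outward on any facet: on $\{s=0\}$ one has $\dot s=k_{-1}c\geq 0$; on $\{c=0\}$ one has $\dot c=k_1e_0s\geq 0$; on $\{c=e_0\}$ one computes $\dot c=-(k_{-1}+k_2)e_0<0$; and on $\{s+c=s_0\}$, writing $\bar s=s+c$, the cancellation $\dot{\bar s}=f_1+f_2=-k_2c\leq 0$ shows $\bar s$ cannot cross $s_0$ from below. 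Combined with the sign of the off-diagonal entries this establishes (a).

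For parts (b) and (c) I would exhibit the sign of each comparison difference on $D$: $f_1-g_1=k_1sc\geq 0$ and $f_2-g_2=k_1(s_0-s)c\geq 0$ (using $0\leq s\leq s+c\leq s_0$ and $c\geq 0$), so $g\prec f$ on $D$; symmetrically $h_1-f_1=k_1(s_0-s)c\geq 0$ and $h_2-f_2=k_1sc\geq 0$, so $f\prec h$ on $D$. Then I would invoke Proposition~1 with \eqref{eqmmirrev} in the role of the cooperative system $\dot x=f$ (justified by (a)): the second bullet, applied with $g$ the right-hand side of the linear system with matrix $G$ and with the common initial value taken in $D$ (so that $z\prec y$ holds with $z=y$), yields the componentwise lower estimate; the first bullet, applied with $h$ in the role of $g$ and again a common initial value, yields the upper estimate. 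Since $D$ is compact and positively invariant, $F(t,y)$ exists for all $t\geq 0$, and the comparison systems are linear hence complete, so both conclusions hold for all $t\geq 0$. It remains only to note that $g(x)=Gx$ and $h(x)=Hx$ for the matrices displayed, and to rewrite these by factoring out $k_1$ and substituting $K_S=k_{-1}/k_1$ and $K_M=(k_{-1}+k_2)/k_1$.

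The only mild obstacle is the positive-invariance check in (a), and within it the observation that the face $\{s+c=s_0\}$ is controlled by the total-substrate identity $\dot{\bar s}=-k_2c$; beyond that the single thing to keep straight is which bullet of Proposition~1 and which orientation of $\prec$ governs the lower versus the upper estimate. No step involves anything beyond elementary algebra, consistent with the authors' remark that the proof is obvious.
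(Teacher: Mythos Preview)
Your proof is correct and is precisely the direct verification the paper has in mind: the authors give no argument beyond the remark that ``the proof of the following facts is obvious,'' and your computations of the Jacobian off-diagonals, the Nagumo boundary checks for positive invariance (including the total-substrate identity $\dot{\bar s}=-k_2c$ on the face $s+c=s_0$), the sign checks $f_i-g_i\geq 0$ and $h_i-f_i\geq 0$, and the invocation of the two bullets of Proposition~1 with coinciding initial values are exactly what is required to unpack that remark.
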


\begin{remark}
The upper estimates become useless in the case $k_1s_0> k_2$, because then one eigenvalue 
of $H$ becomes positive. For viable upper and lower bounds one requires that $s_0< K=k_2/k_1$.
\end{remark}

The right hand sides of both linear comparison systems from parts (b) and (c) of the Lemma converge to 
\begin{equation}\label{star}
Df(0)=
    \begin{pmatrix}
    -k_1e_0 & k_{-1}\\
    k_1e_0& -(k_{-1}+k_2)
\end{pmatrix} =  k_1\begin{pmatrix}
    -e_0 & K_S\\
    e_0& -K_M
\end{pmatrix}
\end{equation}
as $s_0\to 0$. We will use this observation to show that a solution of \eqref{eqmmirrev} with initial value in $D$ 
converges to the solution of \eqref{eqmmlin} with the same initial value on the time interval $[0,\,\infty)$, as 
$s_0\to 0$.

\begin{figure}[htb!]
\centering
\includegraphics[width=8.0cm]{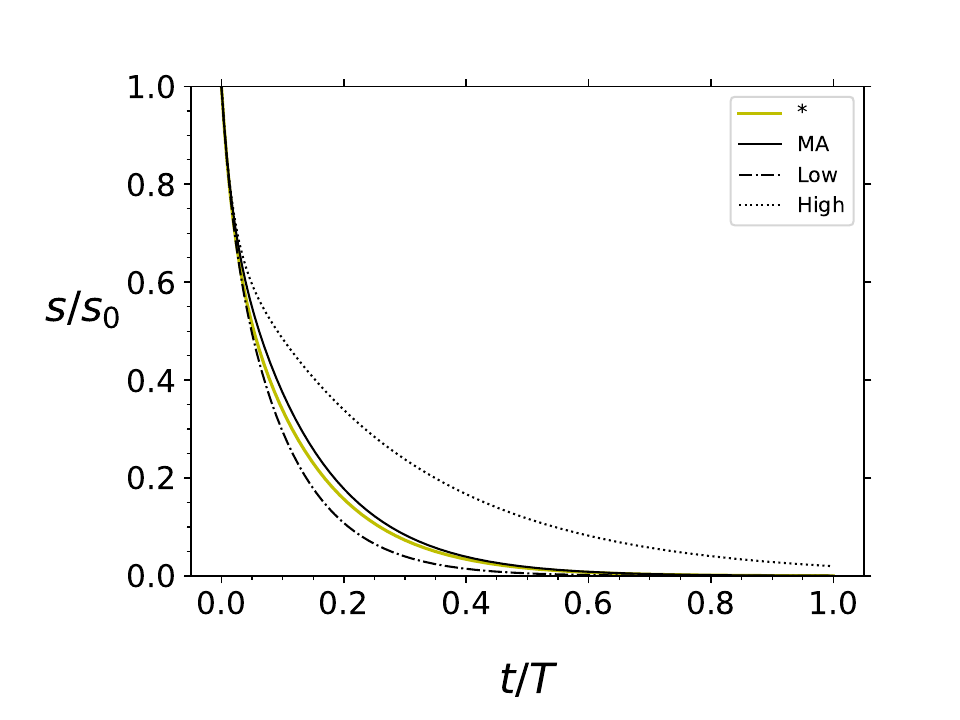}
\includegraphics[width=8.0cm]{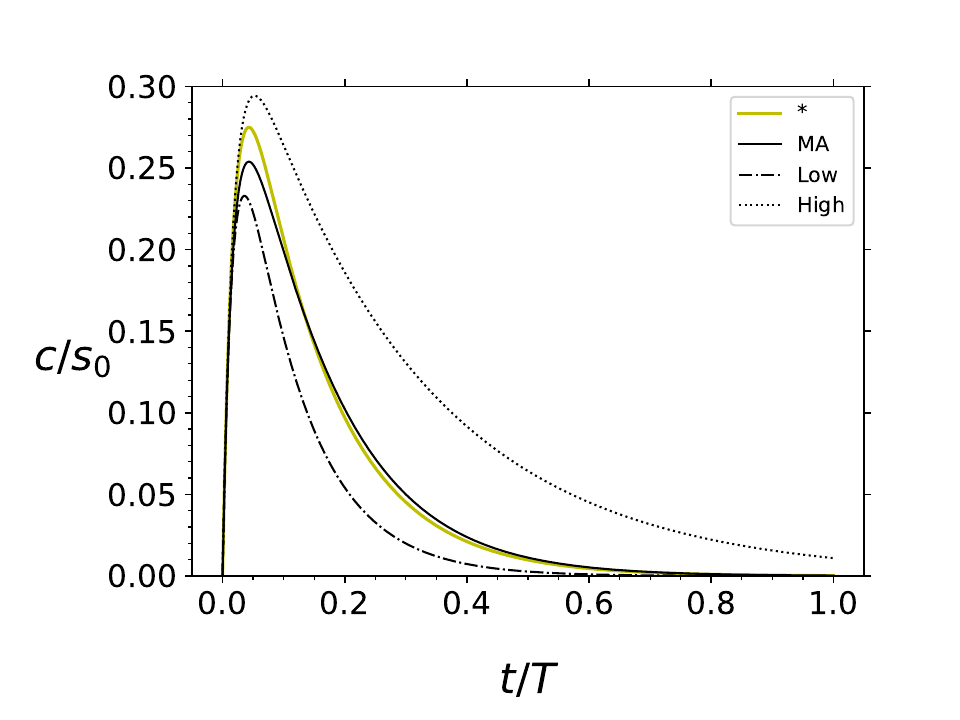}\\
\includegraphics[width=8.0cm]{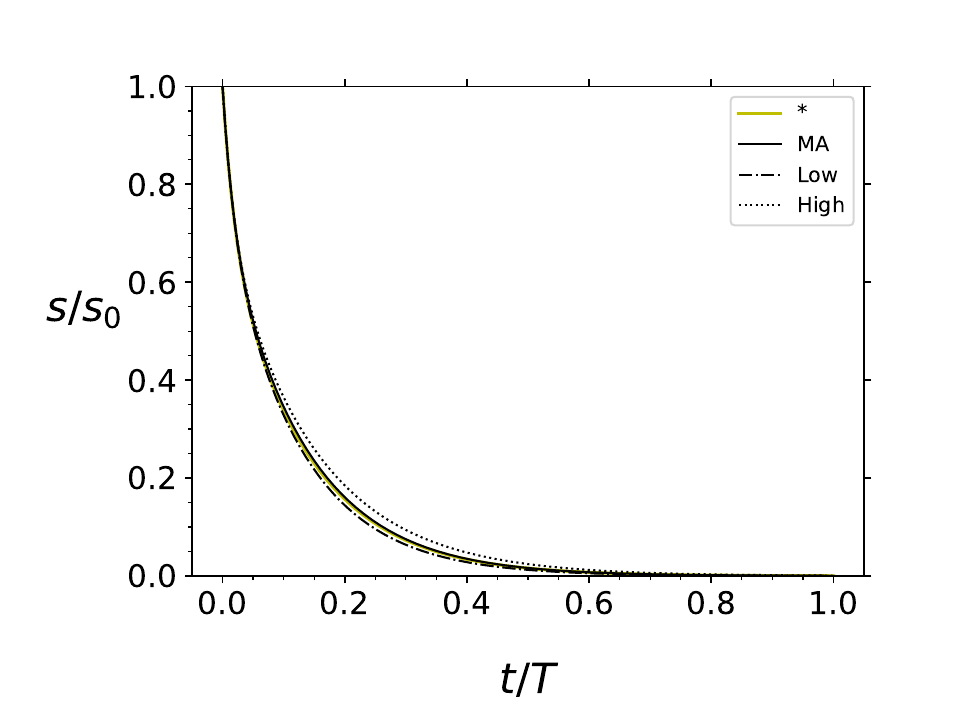}
\includegraphics[width=8.0cm]{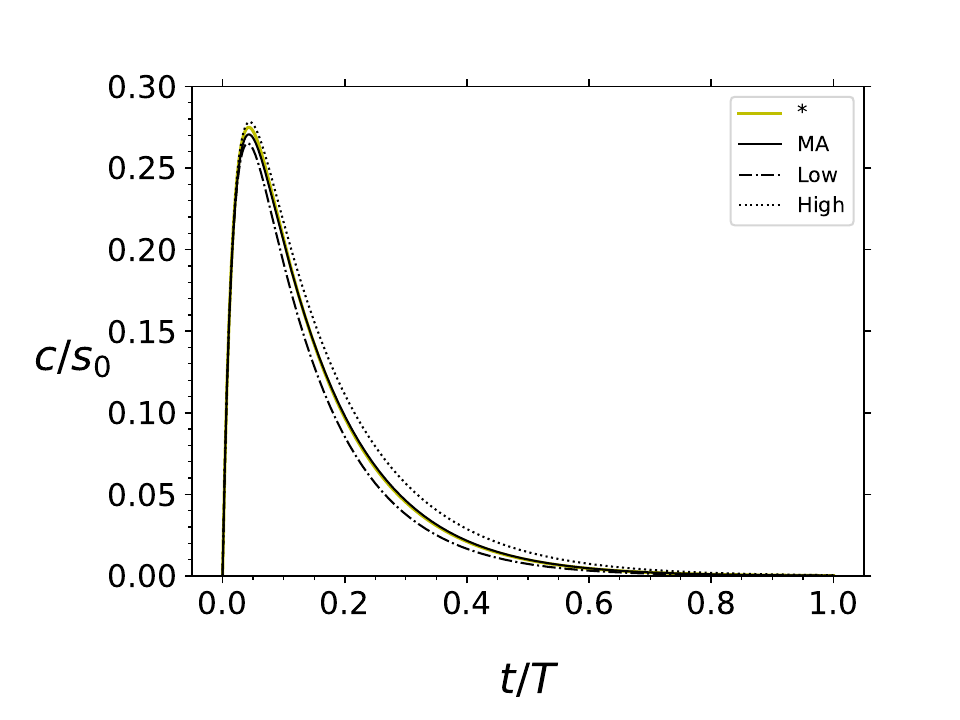}
\caption{{\textbf{Illustration of Proposition \ref{baseprop}. The solution to the Michaelis--Menten 
system~(\ref{eqmmirrev}) converges to the solution of the linear Michaelis--Menten system~(\ref{eqmmlin}) 
as $s_0\to 0$.} In all panels, the solid black curve is the numerical solution to the Michaelis--Menten 
system~(\ref{eqmmirrev}). The thick yellow curve is the numerical solution to linear Michaelis--Menten 
system~(\ref{eqmmlin}). The dashed/dotted curve is the numerical solution to the linear system defined 
by matrix $G$. The dotted curve is the numerical solution to the linear system defined by matrix $H$. 
All numerical simulations where carried out with the following parameters (in arbitrary units): 
$k_1=k_2=k_{-1}=e_0=1$. In all panels, the solutions have been numerically-integrated 
over the domain $t\in[0,T],$ where $T$ is selected to be long enough to ensure that the long-time dynamics 
are sufficiently captured. For illustrative purposes, the horizontal axis (in all four panels) has been 
scaled by $T$ so that the scaled time, $t/T$, assumes values in the unit interval: $\frac{t}{T}\in[0,1]$. 
{\sc Top Left}: The numerically-obtained time course of $s$ with $s_0=0.5$ and $c(0)=0.0$. 
{\sc Top Right}: The numerically-obtained time course of $c$ with $s_0=0.5$ and $c(0)=0.0$. 
{\sc Bottom Left}: The numerically-obtained time course of $s$ with $s_0=0.1$ and $c(0)=0.0$. 
{\sc Bottom Right}: The numerically-obtained time course of $c$ with $s_0=0.1$ and $c(0)=0.0$. 
Observe that the solution components of (\ref{eqmmlin}) become increasingly accurate approximations to 
the solution components of (\ref{eqmmirrev}) as $s_0$ decreases.}}\label{FIG1}
\end{figure}

\subsection{Comparison Michaelis--Menten systems~\eqref{eqmmirrev} and \eqref{eqmmlin}}
Both $G$ and $H$, as well as $Df(0)$, are of the type
\begin{equation}\label{placeholder}
\begin{pmatrix}
    -\alpha & \beta\\  \alpha& -\gamma
\end{pmatrix}
\end{equation}
with all entries $>0$, and $\gamma>\beta$ (to ensure usable estimates). The trace equals $-(\alpha+\gamma)$, 
the determinant  equals $\alpha(\gamma-\beta)$, and the discriminant is 
\[
\Delta=(\alpha+\gamma)^2-4\alpha(\gamma-\beta)=(\alpha-\gamma)^2+4\alpha \beta.
\]
The eigenvalues are 
\[
\lambda_{1,2}=\frac12\left(-(\alpha+\gamma)\pm\sqrt{\Delta}\right)
\]
with eigenvectors
\[
v_{1,2}=\begin{pmatrix}
   \beta \\ \frac12(\alpha-\gamma\pm \sqrt{\Delta}) .
\end{pmatrix}
\]
The following is now obtained in a straightforward (if slightly tedious) manner. We specialize the initial 
value to the typical case, with no complex present at $t=0$.
\begin{lemma}\label{snore}
The solution of the linear differential equation with matrix \eqref{placeholder} and initial value $\begin{pmatrix}
    s_0\\0
\end{pmatrix}$ is equal to
\[
\begin{array}{rccl}
    \begin{pmatrix}
        \widetilde s\\ \widetilde c
    \end{pmatrix} &=& \dfrac{s_0}{2\beta\sqrt{\Delta}}&\left[ (\alpha-\gamma-\sqrt{\Delta})\begin{pmatrix}
       \beta \\ \frac12(\alpha-\gamma+\sqrt{\Delta})
    \end{pmatrix} \,\exp(\left(\frac12 (-\alpha-\gamma+\sqrt{\Delta})t\right)\right.\\
     & & + &\left. (-\alpha+\gamma-\sqrt{\Delta})\begin{pmatrix}
       \beta \\ \frac12(\alpha-\gamma-\sqrt{\Delta})
    \end{pmatrix} \,\exp(\left(\frac12 (-\alpha-\gamma-\sqrt{\Delta})t\right)\right]\\
    &=& \dfrac{s_0}{2\sqrt{\Delta}}&\left[\begin{pmatrix}
        -\alpha+\gamma+\sqrt{\Delta}\\2\alpha
    \end{pmatrix} \,\exp(\left(\frac12 (-\alpha-\gamma+\sqrt{\Delta})t\right)\right.\\
     & & + &\left. \begin{pmatrix}
       \alpha -\gamma+\sqrt{\Delta}\\-2\alpha
    \end{pmatrix}  \,\exp(\left(\frac12(-\alpha-\gamma-\sqrt{\Delta})t\right)\right]\\
\end{array}
\]
    
\end{lemma}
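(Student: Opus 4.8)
\medskip
\noindent\textbf{Proof strategy.}
The plan is to use the spectral data of the matrix \eqref{placeholder} recorded just above. Since $\Delta=(\alpha-\gamma)^2+4\alpha\beta>0$, that matrix has two \emph{distinct} real eigenvalues $\lambda_1>\lambda_2$ with the eigenvectors $v_1,v_2$ listed there, and distinctness forces $v_1,v_2$ to be a basis of $\mathbb{R}^2$. Hence every solution of the linear system with matrix \eqref{placeholder} has the form
\[
\begin{pmatrix}\widetilde s\\ \widetilde c\end{pmatrix}(t)=c_1\,v_1\,e^{\lambda_1 t}+c_2\,v_2\,e^{\lambda_2 t},
\]
and the whole task reduces to determining the constants $c_1,c_2$ from the initial value $\widetilde x(0)=\binom{s_0}{0}$ and then rearranging.

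First I would solve $c_1 v_1+c_2 v_2=\binom{s_0}{0}$. Reading off coordinates, the first equation is $\beta(c_1+c_2)=s_0$ and the second is $c_1\bigl(\alpha-\gamma+\sqrt\Delta\bigr)+c_2\bigl(\alpha-\gamma-\sqrt\Delta\bigr)=0$; the second rearranges to $(c_1+c_2)(\alpha-\gamma)+(c_1-c_2)\sqrt\Delta=0$, so inserting $c_1+c_2=s_0/\beta$ gives $c_1-c_2=-s_0(\alpha-\gamma)/(\beta\sqrt\Delta)$, and then
\[
c_1=\frac{s_0}{2\beta\sqrt\Delta}\bigl(\sqrt\Delta-\alpha+\gamma\bigr),\qquad
c_2=\frac{s_0}{2\beta\sqrt\Delta}\bigl(\sqrt\Delta+\alpha-\gamma\bigr)
\]
(equivalently, Cramer's rule using $\det[v_1\mid v_2]=-\beta\sqrt\Delta$). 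Substituting these back into the ansatz yields the eigenvector form stated in the lemma.

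To pass to the component-explicit form I would cancel the $\beta$ in the first coordinate of each $v_i$ against the prefactor $1/(2\beta\sqrt\Delta)$, which makes the first coordinates collapse at once to $\tfrac{s_0}{2\sqrt\Delta}(\sqrt\Delta-\alpha+\gamma)$ and $\tfrac{s_0}{2\sqrt\Delta}(\sqrt\Delta+\alpha-\gamma)$. The one step needing a little algebra is the second coordinate: for the $\lambda_1$-summand it equals $\tfrac{s_0}{4\beta\sqrt\Delta}(\sqrt\Delta-\alpha+\gamma)(\alpha-\gamma+\sqrt\Delta)$, and the identity
\[
\bigl(\sqrt\Delta+(\gamma-\alpha)\bigr)\bigl(\sqrt\Delta-(\gamma-\alpha)\bigr)=\Delta-(\alpha-\gamma)^2=4\alpha\beta
\]
reduces it to $\tfrac{s_0}{2\sqrt\Delta}\cdot 2\alpha$; the $\lambda_2$-summand is handled identically and gives $-\tfrac{s_0}{2\sqrt\Delta}\cdot 2\alpha$. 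This proves the two displayed expressions agree and completes the argument.

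I do not expect a genuine obstacle here; as the statement itself concedes, the computation is ``straightforward (if slightly tedious)''. The only thing demanding care is sign bookkeeping in $c_1,c_2$ and correctly pairing each eigenvector with its eigenvalue. A convenient safeguard is to check at the end that the final expression returns $\binom{s_0}{0}$ at $t=0$ and that term-by-term differentiation multiplies each summand by the matching $\lambda_i$, so that it genuinely solves the linear system with matrix \eqref{placeholder}. Alternatively one may bypass the derivation and simply verify the stated closed form by direct substitution, which is the shortest route once the formula is available.
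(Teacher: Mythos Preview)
Your approach is exactly what the paper intends; it gives no proof beyond calling the computation ``straightforward (if slightly tedious)'', and your eigenvector ansatz, determination of $c_1,c_2$ from the initial condition, and simplification of the second components via $\Delta-(\alpha-\gamma)^2=4\alpha\beta$ constitute precisely that computation. One small caveat: the $c_1,c_2$ you derive are correct and reproduce the lemma's \emph{second} displayed expression verbatim, but they are the negatives of the scalar prefactors appearing in the lemma's \emph{first} displayed expression---that first line carries a global sign typo (set $t=0$ and its first coordinate returns $-s_0$, not $s_0$), so your claim that substitution ``yields the eigenvector form stated in the lemma'' is right in spirit but will not match the printed first form literally.
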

This enables us to write down explicitly the solution of \eqref{eqmmlin}, as well as upper and lower estimates, 
in terms of familiar constants. We  use {\bf Lemma~\ref{snore}} with $\alpha=k_1e_0$, $\beta=k_1K_S$ and 
$\gamma=k_1K_M$, so $\Delta=k_1^2\left((K_M-e_0)^2+4K_Se_0\right)$, to obtain

\begin{equation}\label{nicelinsol}
    \begin{array}{rcl}
    \begin{pmatrix}
        s^*\\ c^*
    \end{pmatrix}     &=& \dfrac{s_0\exp(-\frac12((K_M+e_0)-\sqrt{(K_M-e_0)^2+4K_Se_0}) k_1t)}{2\sqrt{(K_M-e_0)^2+4K_Se_0}}\times \\ 
        & & \qquad\qquad\qquad
        \begin{pmatrix} +(K_M-e_0)+\sqrt{(K_M-e_0)^2+4K_Se_0}\\2e_0
        \end{pmatrix}  \\
         & +&\dfrac{s_0\exp(-\frac12((K_M+e_0)+\sqrt{(K_M -e_0)^2+4K_Se_0}) k_1t)}{2\sqrt{(K_M-e_0)^2+4K_Se_0}}\times \\ 
      & &  \qquad \qquad\qquad\begin{pmatrix}
        -(K_M-e_0)+\sqrt{(K_M -e_0)^2+4K_Se_0}\\-2e_0
    \end{pmatrix} \\
    \end{array}
\end{equation}
as the solution of \eqref{eqmmlin}, after some simplifications. 
\begin{proposition}\label{baseprop}
To summarize:
\begin{enumerate}[(a)]
    \item The solution $\begin{pmatrix}
        s^*\\ c^*
    \end{pmatrix}$ of the linear differential equation \eqref{eqmmlin} with initial value  $\begin{pmatrix}
s_0\\0
\end{pmatrix}$ is given by \eqref{nicelinsol}.
\item The solution 
    $\begin{pmatrix}
        s_{\rm low}\\ c_{\rm low}
    \end{pmatrix}$ 
    of the linear differential equation with matrix $G$ and initial value 
    $\begin{pmatrix}
    s_0\\0
    \end{pmatrix}$ 
is obtained from replacing $K_M$ by $K_M+s_0$ in \eqref{nicelinsol}.
\item The solution 
    $\begin{pmatrix}
        s_{\rm up}\\ c_{\rm up}
    \end{pmatrix}$ 
of the linear differential equation with matrix $H$ and initial value  
    $\begin{pmatrix}
    s_0\\0
    \end{pmatrix}$ 
is obtained from replacing $K_S$ by $K_S+s_0$ in \eqref{nicelinsol}.
\item Given that $s_0<K$, for all $t\geq 0$ one has the inequalities
    \begin{equation}\label{niceests}
        \begin{array}{cc}
          s_{\rm up} \geq s\geq s_{\rm low},  & \quad   s_{\rm up} \geq s^*\geq s_{\rm low}, \\
          c_{\rm up} \geq c\geq c_{\rm low},  & \quad   c_{\rm up} \geq c^*\geq c_{\rm low}; \\
        \end{array}
    \end{equation}
    where 
    $\begin{pmatrix}
        s\\ c
    \end{pmatrix}$ 
    denotes the solution of \eqref{eqmmirrev} with initial value 
    $\begin{pmatrix}
    s_0\\0
\end{pmatrix}$.
\item Let $\mathcal M$ be a compact subset of the open positive orthant $\mathbb R^4_{>0}$, abbreviate 
$\pi:=(e_0, k_1, k_{-1}, k_2)^{\rm tr}$ and $K^*:=\min\{k_2/k_1;\, \pi \in\mathcal{M}\}$. Then, there 
exists a dimensional constant $C$ (with dimension concentration$^{-1}$), depending only on $\mathcal{M}$, 
such that for all $\pi\in{\mathcal M}$ and for all $s_0$ with $0<s_0\leq K^*/2$ the estimates
    \begin{equation*}
        \begin{array}{rcl}
           \left|\dfrac{s-s^*}{s_0} \right| &  \leq& C\cdot s_0\\
              \left|\dfrac{c-c^*}{s_0} \right| &  \leq& C\cdot s_0\\
        \end{array}
    \end{equation*}
hold for all $t\in [0,\infty)$. Thus, informally speaking, the approximation errors of $s$ by 
$s^*$, and of $c$ by $c^*$, are of order $s_0^2$.
\end{enumerate}
\end{proposition}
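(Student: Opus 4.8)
Parts (a)--(c) are bookkeeping. Part (a) is Lemma~\ref{snore} evaluated at $\alpha=k_1e_0$, $\beta=k_1K_S$, $\gamma=k_1K_M$, i.e.\ exactly the computation \eqref{nicelinsol}. For (b) and (c) I would rerun that evaluation for $G$ and $H$: comparing the factored forms of $G$ and $H$ in Lemma~\ref{estlem}(b),(c) with that of $Df(0)$ in \eqref{star} shows that passing from $Df(0)$ to $G$ only shifts $\gamma=k_1K_M$ to $k_1(K_M+s_0)$, and passing to $H$ only shifts $\beta=k_1K_S$ to $k_1(K_S+s_0)$, with the remaining entries unchanged; reading these single shifts back through \eqref{nicelinsol} gives precisely the stated substitutions $K_M\mapsto K_M+s_0$ and $K_S\mapsto K_S+s_0$. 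Under the standing hypothesis $s_0<K$ all three matrices are of the type \eqref{placeholder} with $\gamma>\beta>0$ (this is the content of the remark following Lemma~\ref{estlem}), so Lemma~\ref{snore} applies verbatim and the two eigenvalues are real and strictly negative.

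For (d) I would apply Kamke's comparison theorem (the Proposition recalled above) four times, always with the common initial value $(s_0,0)^{\rm tr}$, which lies in $D$. For the two bounds on the solution $(s,c)$ of \eqref{eqmmirrev}: by Lemma~\ref{estlem}(a) that system is cooperative on the convex, positively invariant set $D$; Lemma~\ref{estlem}(b),(c) give $g(x)\prec f(x)$ and $f(x)\prec h(x)$ on $D$; since both comparison systems are linear (hence global) and the solution of \eqref{eqmmirrev} stays in $D$ for all $t\ge0$, the theorem yields $(s_{\rm low},c_{\rm low})\prec(s,c)\prec(s_{\rm up},c_{\rm up})$. For the two bounds on $(s^*,c^*)$: the matrix $Df(0)$ of \eqref{eqmmlin} is Metzler, so $\dot x=Df(0)\,x$ is cooperative on the closed positive orthant, which is convex and positively invariant for it; an entrywise check using $c\ge0$ gives $G\,x\prec Df(0)\,x\prec H\,x$ there, and the theorem (now with the $Df(0)$-system as cooperative reference) yields $(s_{\rm low},c_{\rm low})\prec(s^*,c^*)\prec(s_{\rm up},c_{\rm up})$. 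Together these are \eqref{niceests}.

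For (e) the point of departure is \eqref{niceests}: both $s$ and $s^*$ lie in $[s_{\rm low},s_{\rm up}]$, and both $c$ and $c^*$ in $[c_{\rm low},c_{\rm up}]$, so it suffices to bound $s_{\rm up}-s_{\rm low}$ and $c_{\rm up}-c_{\rm low}$ by $C\,s_0^2$, uniformly in $t\ge0$, $\pi\in\mathcal M$, and $0<s_0\le K^*/2$. By linearity, the solution of the system with matrix \eqref{placeholder} and initial value $(s_0,0)^{\rm tr}$ equals $s_0\,\psi(t;\alpha,\beta,\gamma)$, where $\psi=(\psi_s,\psi_c)$ solves the same system from $(1,0)^{\rm tr}$; hence, with $\alpha_0=k_1e_0$, $\beta_0=k_1K_S$, $\gamma_0=k_1K_M$, parts (a)--(c) give
\[
\frac{s_{\rm up}-s_{\rm low}}{s_0}=\psi_s(t;\alpha_0,\beta_0+k_1s_0,\gamma_0)-\psi_s(t;\alpha_0,\beta_0,\gamma_0+k_1s_0),
\]
and likewise for $c$. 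Telescoping through $\psi_s(t;\alpha_0,\beta_0,\gamma_0)$ and applying the mean value theorem separately in the parameters $\beta$ and $\gamma$ rewrites the right-hand side as $k_1s_0\big(\partial_\beta\psi_s(t;\alpha_0,\xi,\gamma_0)-\partial_\gamma\psi_s(t;\alpha_0,\beta_0,\eta)\big)$ with $\xi\in[\beta_0,\beta_0+k_1s_0]$ and $\eta\in[\gamma_0,\gamma_0+k_1s_0]$; so $s_{\rm up}-s_{\rm low}\le 2k_1L\,s_0^2$ and $c_{\rm up}-c_{\rm low}\le 2k_1L\,s_0^2$, where $L$ is a bound for the four sensitivities $\partial_\beta\psi_s,\partial_\gamma\psi_s,\partial_\beta\psi_c,\partial_\gamma\psi_c$ valid for all $t\ge0$ and all parameter triples occurring. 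Absorbing $\sup_{\mathcal M}k_1$ into the constant produces $C$, of dimension (concentration)$^{-1}$ since $\psi$ is dimensionless, $\partial_\beta\psi$ has dimension of time, and $k_1$ of (concentration$\cdot$time)$^{-1}$.

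The one genuine obstacle is the \emph{uniform-in-$t$} finiteness of $L$. As $\pi$ ranges over $\mathcal M$ and $s_0$ over $[0,K^*/2]$, the base point $(\alpha_0,\beta_0,\gamma_0)$ together with the two segments used in the mean value theorem stays in a fixed compact set $\mathcal R$ (a continuous image of a compact set); on $\mathcal R$ one has $\alpha$ and $\beta$ bounded away from $0$ and, crucially, $\gamma-\beta\ge k_2-k_1s_0\ge k_2/2$ because $k_1s_0\le k_1K^*/2\le k_2/2$ -- this is exactly what the factor $1/2$ in the hypothesis $s_0\le K^*/2$ secures -- so the determinant $\alpha(\gamma-\beta)$ and the discriminant $\Delta=(\alpha-\gamma)^2+4\alpha\beta\ge4\alpha\beta$ are bounded away from $0$ as well, and $\sqrt\Delta$ is smooth on $\mathcal R$. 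In particular the eigenvalues are real with $|\lambda_{1,2}|\ge\alpha(\gamma-\beta)/(\alpha+\gamma)\ge\delta>0$ on $\mathcal R$. Differentiating the closed form of Lemma~\ref{snore} in $\beta$ or in $\gamma$ produces a finite sum of terms of the shape $q(\alpha,\beta,\gamma)\,e^{\lambda_i t}$ and $r(\alpha,\beta,\gamma)\,t\,e^{\lambda_i t}$ with $q,r$ continuous on $\mathcal R$; since $0\le e^{\lambda_i t}\le1$ and $0\le t\,e^{\lambda_i t}\le1/(e\delta)$ for all $t\ge0$, each term is bounded on $\mathcal R\times[0,\infty)$ by a continuous function of the parameters, and maximizing over $\mathcal R$ yields the finite constant $L$. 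This closes the estimate and hence the proof.
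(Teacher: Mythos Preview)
Your proof is correct and follows the same route as the paper. Parts (a)--(d) are handled identically; in part (e) the paper also reduces to bounding $(s_{\rm up}-s_{\rm low})/s_0$ and $(c_{\rm up}-c_{\rm low})/s_0$ via the intermediate $s^*,c^*$, and then exploits exactly the three ingredients you isolate: smooth dependence of the closed-form coefficients and eigenvalues on the parameters over the compact set $[0,K^*/2]\times\mathcal M$ (using $\Delta\ge 4\alpha\beta>0$), a uniform lower bound on $|\lambda_i|$ coming from $\gamma-\beta\ge k_2/2$ (which is precisely what $s_0\le K^*/2$ buys), and the bound $\sup_{t\ge0}t e^{-\delta t}<\infty$. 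The only cosmetic difference is that the paper splits each mode as $B_i(s_0,\pi)e^{-A_i(s_0,\pi)t}$ and estimates the $B_i$-difference by Taylor and the exponential difference by the mean value theorem separately, whereas you apply the mean value theorem once to the full parameterized solution $\psi$ and then differentiate the closed form; the resulting terms and bounds are the same.
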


\begin{proof}
The first three items follow by straightforward calculations. As for part (d), the first column of \eqref{niceests} 
is just a restatement of {\bf Lemma \ref{estlem}}, while the second follows from the observation that (mutatis 
mutandis) the right-hand side of \eqref{eqmmlin} also obeys the estimates in parts (b) and (c) of this Lemma.

There remains part (e). In view of part (d) it suffices to show that
\begin{equation*}
    \begin{array}{rcl}
        \left|\dfrac{s_{\rm up}-s_{\rm low}}{s_0} \right| &  \leq& C\cdot s_0\\
            \left|\dfrac{c_{\rm up}-c_{\rm low}}{s_0} \right| &  \leq& C\cdot s_0\\
    \end{array}
\end{equation*}
for some constant and, in turn, it suffices to show such estimates for both $s_{\rm up}-s^*$, $s^*-s_{\rm low}$, 
and $c_{\rm up}-c^*$, $c^*-c_{\rm low}$. We will outline the relevant steps, pars pro toto, for the upper estimates.

By \eqref{nicelinsol} and part (c), one may write
\[
\dfrac{1}{s_0}
    \begin{pmatrix}
       s_{\rm up}\\ c_{\rm up}
    \end{pmatrix}
    =B_1(s_0,\pi)\exp (-A_1(s_0,\pi)t)+B_2(s_0,\pi)\exp (-A_2(s_0,\pi)t),
\]
with the $B_i$ and $A_i$ continuously differentiable in a neighborhood of $[0,K^*/2]\times \mathcal{M}\times[0,\infty)$.
Moreover, one sees
\[
    \dfrac{1}{s_0}
    \begin{pmatrix}
       s^*\\ c^*
    \end{pmatrix}
    =B_1(0,\pi)\exp (-A_1(0,\pi)t)+B_2(0,\pi)\exp (-A_2(0,\pi)t).
\]
It suffices to show that (with the maximum norm $\Vert \cdot \Vert$)
\[
\Vert B_i(s_0,\pi)\exp (-A_i(s_0,\pi)t)-B_i(0,\pi)\exp (-A_i(0,\pi)t)\Vert \leq s_0\cdot\,{\rm const.}
\]
in $\widetilde {\mathcal M}:=[0,K^*/2]\times \mathcal{M}\times[0,\infty)$, for $i=1,\,2$.

From compactness of $[0,K^*/2]\times {\mathcal M}$, continuous differentiability and the explicit form of $A_i$ and 
$B_i$ one obtains estimates, with the parameters and variables in $\widetilde{\mathcal M}$:
\begin{itemize}
    \item There is $A_*>0$ so that $|A_i(s_0,\pi)|\geq A_*$. 
    \item There is $B^*>0$ so that $\Vert B_i(s_0, \pi)\Vert \leq B^*$. 
    \item There are continuous $\widetilde B_i$ so that $B_i(s_0,\pi)-B_i(0,\pi)=s_0\cdot \widetilde B_i(s_0,\pi)$ 
    (Taylor), hence there is $B^{**}>0$ so that $\Vert B_i(s_0,\pi)-B_i(0,\pi)\Vert \leq s_0\cdot B^{**}$. 
    \item By the mean value theorem there exists $\sigma$ between $0$ and $s_0$ so that 
    \[
    \exp\left(-A_i(s_0, \pi)t\right)-\exp\left(-A_i(0, \pi)t\right)= -t\cdot \dfrac{\partial A_i}{\partial s_0}\,(\sigma,\pi)\cdot \exp\left(-A_i(\sigma, \pi)t\right)\,s_0.
    \]
    So, with the constant $A^{**}$ satisfying $A^{**}\geq |\dfrac{\partial A_i}{\partial s_0}|$ for all arguments 
    in $\widetilde{\mathcal{M}}$ one gets 
    \[
    |\exp\left(-A_i(s_0, \pi)t\right)-\exp\left(-A_i(0, \pi)t\right)|\leq A^{**}\cdot t\exp\left(-A_i(\sigma, \pi)t\right)\, s_0\leq A^{**}\cdot t\exp\left(-A_*t\right)\leq \dfrac{A^{**}}{A_*}s_0.
    \]
\end{itemize}
So
\begin{equation*}
    \begin{array}{cl}
         &   \Vert B_i(s_0,\pi)\exp (-A_i(s_0,\pi)t)-B_i(0,\pi)\exp (-A_i(0,\pi)t)\Vert \\
       \leq   &  \Vert B_i(s_0,\pi)-B_i(0,\pi)\Vert \cdot \exp (-A_i(s_0,\pi)t)\\
        & + \Vert B_i(0,\pi)\Vert \cdot |\exp (-A_i(s_0,\pi)t)-\exp (-A_i(0,\pi)t)| \\
        \leq &s_0\cdot (B^{**}+ B^*\cdot A^{**}/A_*),\\
    \end{array}
\end{equation*}
and this proves the assertion. 
\end{proof}
For illustrative purposes, a numerical example is given in Figure \ref{FIG1}.

\begin{remark}
Our result should not be mistaken for the familiar fact that in the vicinity of the stationary 
point $0$ the Michaelis--Menten system~\eqref{eqmmirrev} is (smoothly) equivalent to its linearization~\eqref{eqmmlin}; 
see, e.g. Sternberg~\cite[Theorem 2]{Stern}. The point of the Proposition is that the estimate, and the convergence 
statement, hold globally.
\end{remark}

\begin{remark}Some readers may prefer a dimensionless parameter (and symbols like $\ll$) to gauge 
the accuracy of the approximation. In view of {\bf Lemma \ref{estlem}} and equation~\eqref{star} it seems natural 
to choose $\varepsilon:=s_0/K_S<s_0/K_M$, which estimates the convergence of $G$ and $H$ to $Df(0)$, and set up the 
criterion $s_0/K_S\ll 1$. But still the limit $s_0\to 0$ has to be considered.
\end{remark}

\begin{remark}
Since explicit expressions are obtainable from \eqref{nicelinsol} for the $A_i$ and $B_i$ in the 
proof of part (e), one could refine it to obtain quantitative estimates. This will not be pursued further here.
\end{remark}

\subsection{Timescales for the linear Michaelis--Menten system~\eqref{eqmmlin}}\label{subs:time} 
In contrast to familiar reduction scenarios for the Michaelis--Menten system~\eqref{eqmmirrev}, low initial substrate 
linearization~\eqref{eqmmlin} does not automatically imply a separation of timescales. Timescale separation depends 
on further conditions that we discuss next.

Recall the constants introduced in \eqref{constants} and note
\[
\Delta= k_1^2\left((K_M+e_0)^2-4Ke_0\right).
\]
In this notation, the eigenvalues of the matrix in the linearized system \eqref{eqmmlin} are 
\begin{equation}\label{evaldisc}
    \lambda_{1,2}=\frac{k_1}{2}(K_M+e_0)\left(-1\pm\sqrt{1-\dfrac{4Ke_0}{(K_M+e_0)^2}}\right).
\end{equation}
This matrix is the Jacobian at the stationary point $0$, thus it is possible to use the results from earlier 
work \cite{ESWAnti}.

\begin{remark}
Since timescales are inverse absolute eigenvalues, by \cite[subsection 3.3]{ESWAnti}, \eqref{evaldisc} shows:
\begin{enumerate}[(a)]
\item The timescales are about equal whenever
\[
4Ke_0\approx (K_M+e_0)^2,
\]
which is the case, notably, when $K_S\approx 0$ (so $K_M\approx K$) and $K_M\approx e_0$.
\item   On the other hand, one sees from \eqref{evaldisc} that a significant timescale separation exists whenever 
(loosely speaking, employing a widely used symbol)
\begin{equation}\label{timescalecond}
    \dfrac{4Ke_0}{(K_M+e_0)^2}\ll 1.
\end{equation}
Notably this is the case when $e_0/K_M$ is small, or when $K/K_M$ is small. 
\item Moreover, condition~\eqref{timescalecond} is satisfied in a large part of parameter space (in a 
well-defined sense) as shown in Eilertsen et al.~\cite[subsection 3.3, in particular Figure~2]{ESWAnti}.
\end{enumerate}
\end{remark}

\begin{proposition}\label{timescaleprop}
We take a closer look at the scenario with significant timescale separation, stating the results in a somewhat 
loose language.
\begin{enumerate}[(a)]
\item Given significant timescale separation as in \eqref{timescalecond}, the slow eigenvalue is approximated by
    \begin{equation}
        \lambda_1\approx -k_1\cdot\dfrac{Ke_0}{K_M+e_0}=-\dfrac{k_2e_0}{K_M+e_0},
    \end{equation}
and the reduced equation is 
    \begin{equation}
        \dfrac{d}{dt}\begin{pmatrix} s\\ c
        \end{pmatrix} =-\dfrac{k_2e_0}{K_M+e_0}\begin{pmatrix} s\\ c
        \end{pmatrix}.
    \end{equation}
\item Given significant timescale separation as in \eqref{timescalecond}, the eigenspace for the slow eigenvalue
is asymptotic to the subspace spanned by
    \begin{equation}
       \widehat v_1:=\begin{pmatrix}
                 K_M-\dfrac{Ke_0}{K_M+e_0}\\ e_0
             \end{pmatrix}.
    \end{equation}
\end{enumerate}
\end{proposition}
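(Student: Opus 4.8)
The plan is to treat Proposition~\ref{timescaleprop} as an asymptotic analysis of the explicit eigen-data recorded in \eqref{placeholder}--\eqref{evaldisc}, with the small parameter $\delta := 4Ke_0/(K_M+e_0)^2$. Everything reduces to expanding $\sqrt{1-\delta}$ for small $\delta$, so the proof is essentially bookkeeping; the only genuine issue is stating precisely what ``asymptotic'' means and in which regime the expansions are uniform.

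For part (a), I would start from \eqref{evaldisc}. Writing $\sqrt{1-\delta} = 1 - \tfrac12\delta + O(\delta^2)$ gives, for the slow eigenvalue (the one with the $+$ sign, closer to zero),
\[
\lambda_1 = \frac{k_1}{2}(K_M+e_0)\left(-\delta/2 + O(\delta^2)\right) = -\frac{k_1}{4}(K_M+e_0)\,\delta + O\!\left((K_M+e_0)\delta^2\right) = -\frac{k_1 K e_0}{K_M+e_0} + O\!\left((K_M+e_0)\delta^2\right),
\]
and $k_1 K = k_2$ by \eqref{constants}, which is the claimed approximation $\lambda_1 \approx -k_2 e_0/(K_M+e_0)$. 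The reduced equation is then obtained by the standard slow-manifold reduction for a linear system with well-separated negative eigenvalues: the fast mode $\exp(\lambda_2 t)$ decays on the fast timescale $|\lambda_2|^{-1} \approx (k_1(K_M+e_0))^{-1}$, so after the transient the trajectory lies on the slow eigenspace and both components evolve like $\exp(\lambda_1 t)$; replacing $\lambda_1$ by its leading approximation yields $\tfrac{d}{dt}(s,c)^{\mathrm{tr}} = -\tfrac{k_2 e_0}{K_M+e_0}(s,c)^{\mathrm{tr}}$. One should note (or cite \cite[subsection 3.3]{ESWAnti}) that the relative error in $\lambda_1$ is $O(\delta)$, which is what makes the one-dimensional reduction accurate precisely under \eqref{timescalecond}.

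For part (b), I would take the eigenvector formula $v_{1,2} = \bigl(\beta,\ \tfrac12(\alpha-\gamma\pm\sqrt{\Delta})\bigr)^{\mathrm{tr}}$ from \eqref{placeholder} with $\alpha = k_1 e_0$, $\beta = k_1 K_S$, $\gamma = k_1 K_M$, so the slow eigenvector is proportional to $\bigl(k_1 K_S,\ \tfrac12(k_1 e_0 - k_1 K_M + \sqrt{\Delta})\bigr)^{\mathrm{tr}}$. Since $\Delta = k_1^2((K_M+e_0)^2 - 4Ke_0) = k_1^2(K_M+e_0)^2(1-\delta)$, one has $\sqrt{\Delta} = k_1(K_M+e_0)(1-\tfrac12\delta + O(\delta^2)) = k_1(K_M+e_0) - k_1\tfrac{Ke_0}{K_M+e_0} + O((K_M+e_0)\delta^2)$. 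Substituting, the second component becomes $\tfrac12 k_1\bigl(e_0 - K_M + (K_M+e_0) - \tfrac{Ke_0}{K_M+e_0}\bigr) + O(\ldots) = k_1\bigl(e_0 - \tfrac{Ke_0}{2(K_M+e_0)}\bigr) + O(\ldots)$. Rescaling the eigenvector so that the second component is exactly $e_0$ (divide by the appropriate positive scalar), the first component limits to $K_M - \tfrac{Ke_0}{K_M+e_0}$ up to $O(\delta)$ corrections, matching $\widehat v_1$. A cleaner route that avoids re-deriving the normalization: verify directly that $\widehat v_1$ is an \emph{approximate} eigenvector by computing $(Df(0) - \lambda_1 I)\widehat v_1$ with $\lambda_1$ from part (a) and checking that the residual is $O(\delta)$ relative to $k_1(K_M+e_0)\|\widehat v_1\|$; since $\lambda_1$ and $\lambda_2$ are separated by a factor $\asymp \delta^{-1}$, a standard perturbation bound then forces $\widehat v_1$ to be within $O(\delta)$ of the true slow eigenspace.

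The main obstacle is not any single computation but making the word ``asymptotic''/``approximately'' rigorous in a way that is honest about uniformity: the expansions of $\sqrt{1-\delta}$ have error $O(\delta^2)$ with a constant that is uniform only on compacta, and the size of the absolute (as opposed to relative) error in $\lambda_1$ carries a factor $(K_M+e_0)$, so one must either fix a compact parameter set (as in Proposition~\ref{baseprop}(e)) or phrase the conclusions as statements about the dimensionless ratios $\lambda_1/(k_2 e_0/(K_M+e_0))$ and the direction of $\widehat v_1$. Given the explicitly ``loose language'' disclaimer in the proposition statement, I would present the $\delta$-expansions cleanly, record the relative-error orders, and remark that the precise uniform estimates follow exactly as in the proof of Proposition~\ref{baseprop}(e) and in \cite[subsection 3.3]{ESWAnti}, without grinding through them again.
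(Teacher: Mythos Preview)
Your approach is the same as the paper's: both parts come from the first-order Taylor expansion $\sqrt{1-\delta}\approx 1-\delta/2$ applied to \eqref{evaldisc} and to the explicit slow eigenvector. Two small points. First, you have a factor-of-two slip: $(K_M+e_0)\cdot\tfrac12\delta=\dfrac{2Ke_0}{K_M+e_0}$, not $\dfrac{Ke_0}{K_M+e_0}$, so your intermediate expressions for $\sqrt{\Delta}$ and for the second eigenvector component are off; this propagates into the rescaling step. Second, for part (b) the paper avoids any re-normalization by taking the slow eigenvector in the form displayed in \eqref{nicelinsol}, namely
\[
v_1^*=\begin{pmatrix}(K_M-e_0)+\sqrt{(K_M+e_0)^2-4Ke_0}\\[2pt]2e_0\end{pmatrix},
\]
whose second entry is already $2e_0$; substituting the same square-root expansion into the first entry gives $2K_M-\dfrac{2Ke_0}{K_M+e_0}$, i.e.\ $v_1^*\approx 2\widehat v_1$ in one line. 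Using the $(\beta,\tfrac12(\alpha-\gamma+\sqrt{\Delta}))$ form instead, as you do, is of course equivalent but forces you to divide through by a $\delta$-dependent scalar and then argue that the resulting first component matches $K_M-\dfrac{Ke_0}{K_M+e_0}$ to the stated order --- extra bookkeeping that the paper's normalization sidesteps. Your alternative ``cleaner route'' (checking directly that $(Df(0)-\lambda_1 I)\widehat v_1$ is small) is perfectly sound and indeed more robust than either computation; the paper does not take it, but given the ``loose language'' disclaimer nothing more than the one-line expansion above is expected.
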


\begin{proof}
Given \eqref{timescalecond} one has 
\[
\sqrt{(K_M+e_0)^2-4Ke_0}=(K_M+e_0)\sqrt{1-\dfrac{4Ke_0}{(K_M+e_0)^2}}\approx(K_M+e_0)\left(1-\dfrac{2Ke_0}{(K_M+e_0)^2}\right)
\]
by Taylor approximation, from which part (a) follows.

Moreover, according to \eqref{nicelinsol}, the direction of the slow eigenspace is given by 
\[
v_1^*=\begin{pmatrix}
        (K_M-e_0)+\sqrt{(K_M -e_0)^2+4K_Se_0}\\2e_0
    \end{pmatrix} =\begin{pmatrix}
        (K_M-e_0)+\sqrt{(K_M +e_0)^2-4Ke_0}\\2e_0
    \end{pmatrix}.
\]
By the same token, we have the approximation 
\begin{equation}
    \begin{array}{rcc}
        v_1^*    &\approx&
        \begin{pmatrix}
        (K_M-e_0)+(K_M+e_0)\left(1-\dfrac{2e_0K}{(K_M+e_0)^2}\right)\\2e_0
        \end{pmatrix}  \\
            & =&2
        \begin{pmatrix}
                K_M-\dfrac{Ke_0}{K_M+e_0}\\ e_0
        \end{pmatrix} 
        =2 \widehat v_1,
    \end{array}
\end{equation}
which shows part (b).
\end{proof}

\begin{remark}
One should perhaps emphasize that {\bf Proposition~\ref{timescaleprop}} indeed describes a singular perturbation reduction when
$\lambda_1\to 0$. This can be verified from the general reduction formula in Goeke and Walcher~\cite{gw2} (note that the
Michaelis--Menten system~\eqref{eqmmlin} is not in standard form with separated slow and fast variables), with elementary 
computations.
\end{remark}
    
Finally we note that {\bf Proposition~\ref{timescaleprop}} is applicable to the discussion of the linearized total 
quasi-steady-state approximation. Details are discussed in \cite{ESWUnreason}.

\section{Conclusion}
Our comprehensive analysis of the Michaelis--Menten system under low initial substrate concentration addresses an important 
gap in the literature. We obtain the sufficient\footnote{Necessity can easily be verified. See also the discussion in \cite{ESWUnreason} for examples.} condition, $s_0 \ll K$ for the global approximation of the Michaelis--Menten 
system~\eqref{eqmmirrev} by the linear Michaelis--Menten system~\eqref{eqmmlin}. This simplification, known as the pseudo-first-order 
approximation, is widely used in transient kinetics experiments to measure enzymatic reaction rates. Moreover, given the 
biochemical significance of this reaction mechanism and the renewed interest in the application of the total quasi-steady 
state approximation in pharmacokinetics experiments under low initial substrate concentration \cite{BYKK,VSTetal}, the 
linearization analysis holds clear practical relevance.  

From a mathematical perspective, it is intriguing to observe a simplification that arises independently of timescale separation 
or invariant manifolds.  The natural next step is to explore this approach with other familiar enzyme reaction mechanism 
(e.g., those involving competitive and uncompetitive inhibition, or cooperativity [see, for example, Keener and 
Sneyd~\cite{KeSn}]).  However, a direct application of Kamke's comparison theorem is not always feasible. Enzyme catalyzed 
reaction mechanism with competitive or uncompetitive inhibition do not yield cooperative differential equations, and 
standard cooperative networks do so only within specific parameter ranges. Investigating these systems under low initial 
substrate conditions will require the development of further mathematical tools. 

\section*{Data availability}
We do not analyse or generate any datasets, because our work proceeds within a theoretical and mathematical approach.



\begin{thebibliography}{99}

\bibitem{BYKK} 
H.-M.~Back, H.-Y.~Yun, S.K.~ Kim, J.K.~Kim: {\it Beyond the Michaelis-Menten: Accurate Prediction of In
Vivo Hepatic Clearance for Drugs With Low KM.} Clin. Transl. Sci. {\bf 13}, 1199--1207 (2020).

\bibitem{BBS} 
J.A.M.~Borghans, R.J.~de Boer, L.A.~Segel: {\it Extending the quasi-steady state approximation by 
changing variables.} Bull. Math. Biol. {\bf 58}, 43--63 (1996).

\bibitem{BH} 
G.E. Briggs, J.B.S.~Haldane: {\it A note on the kinetics of enzyme action}, Biochem. J. {\bf19}, 
338--339 (1925).

\bibitem{DLAS}
P.~De Leenheer, D. Angeli, E.D.~Sontag: {\it Monotone chemical reaction networks.} J.~Math.~Chem. 
{\bf 41}, 295--314 (2007).

\bibitem{ESWAnti}
J.~Eilertsen, S.~Schnell, S.~Walcher: {\it On the anti-quasi-steady-state conditions of enzyme kinetics.} 
Math.~Biosci. {\bf 350}, 108870 (2022).

\bibitem{ESWUnreason}
J.~Eilertsen, S.~Schnell, S.~Walcher: {\it The unreasonable effectiveness of the total quasi-steady 
state approximation, and its limitations.} J. Theor. Biol., in press (preprint available 
in bioRxiv, 2023.09.28.559798)  (2023).


\bibitem{gw2} 
A. Goeke, S. Walcher: {\it A constructive approach to quasi-steady state reduction.} J. Math. 
Chem. {\bf 52}, 2596--2626 (2014).

\bibitem{Hir} 
M.W.~Hirsch: {\it Systems of differential equations which are competitive or cooperative. I. Limit sets.}
SIAM J. Math. Anal. {\bf 13}, 167--179 (1982).

\bibitem{Kam}
E.~Kamke: {\it Zur Theorie der Systeme gew\"ohnlicher Differentialgleichungen II.}  Acta Math.
{\bf 58}, 57--85 (1932).

\bibitem{KeSn}  J.~Keener, J.~Sneyd: {\it Mathematical physiology I: Cellular physiology}, {Second Ed}.
{Springer-Verlag}, {New York} (2009).


\bibitem{Laidler} 
H.P.~Kasserra, K.J.~Laidler: {\it Transient-phase studies of a trypsin-catalyzed reaction}. 
Can. J. Chem. {\bf 48}: 1793–-1802 (1970).

\bibitem{MM1913} 
L.~Michaelis, M.~L. Menten: {\it Die {Kinetik} der {Invertinwirkung}.} {Biochem. Z.}, \textbf{49}, 
333--369 (1913).

\bibitem{Pedersen} 
M.G.~Pedersen, A.M.~Bersani: {\it Introducing total substrates simplifies theoretical analysis at 
non-negligible enzyme concentrations: Pseudo first-order kinetics and the loss of zero-order ultrasensitivity.}
{J. Math. Biol.}, \textbf{60}, 267--283 (2010).

\bibitem{Pettersson} 
G.~Pettersson {\it A generalized theoretical treatment of the transient-state kinetics of 
enzymic reaction systems far from equilibrium}. Acta Chem. Scand. B {\bf 32}, 437–-446 (1978).

\bibitem{Schnell}
S.~Schnell, C.~Mendoza: {\it The condition for pseudo-first-order kinetics in enzymatic reactions is independent
of the initial enzyme concentration.} Biophys. Chem. {\bf 107}: 165-–174 (2004).

\bibitem{SSl}
L.A.~Segel, M.~Slemrod: {\it The quasi-steady-state assumption: A case study in perturbation.} SIAM 
Review {\bf 31}, 446--477 (1989).

\bibitem{Silicio}
F.~Silicio, M.D.~Peterson: {\it Ratio errors in pseudo first order reactions.} J. Chem. Educ. 
{\bf 38}: 576-–577 (1961).

\bibitem{Smi} H.L.~Smith: {\it Monotone Dynamical Systems.} Am. Math. Soc., Providence (1995).

\bibitem{Stern} 
S.~Sternberg: {\it On the structure of local homeomorphisms of euclidean n-space. II.} Amer. J. 
Math. {\bf 80}, 623--631 (1958).



\bibitem{VSTetal} 
N.A.T. Vu, Y.M. Song, Q. Tran, H.-Y. Yun, S.K. Kim, J-w. Chae and J.K. Kim: {\it Beyond the 
Michaelis-–Menten: Accurate prediction of drug interactions through Cytochrome P450 3A4 induction.} 
Clin. Pharmacol. Ther. {\bf 113}, 1048--1057 (2023).

\end{thebibliography}
\end{document}